\newtheorem{proposition}{Proposition}
\def\RR{\overline{\mathbb{R}}}
\def\RRn{\RR^n}
\def\R{\mathbb{R}}
\def\Rp{\overline{\mathbb{R}}}
\def\Rpn{\Rp^n}
\def\Rpnn{\Rp^{n\times n}}
\def\supp{\operatorname{supp}}
\def\bzero{\mathbf{0}}
\def\bunity{\mathbf{1}}
\def\bez{\backslash}
\def\oI{\overline{I}}
\def\oW{\overline{W}}
\begin{document}
\title{Basic solutions of systems with two max-linear inequalities}
\thanks{This research was supported by EPSRC grant RRAH12809 and
RFBR grant 08-01-00601 (S. Sergeev) and 
NRC grant RGPIN-143068-05 (E. Wagneur)}
\keywords{Max algebra, tropical algebra, basis, extremal ray}

\author{Serge\u{\i} Sergeev}
\address{University of Birmingham,
School of Mathematics, Watson Building, Edgbaston B15 2TT, UK}
\email{sergeevs@maths.bham.ac.uk}

\author{Edouard Wagneur}
\address{GERAD and
D\'{e}partement de Math\'{e}matiques et G\'{e}nie Industriel
\'{E}cole Polytechnique de Montr\'{e}al
Montr\'{e}al, PQ, Canada}
\email{Edouard.Wagneur@gerad.ca}

\subjclass{Primary: 15A48, 15A06, Secondary: 06F15}

\begin{abstract}
We give an explicit description of the
basic solutions of max-linear systems $A\otimes x\leq B\otimes x$
with two inequalities.
\end{abstract}
\maketitle

\section{Introduction}

We consider systems of two max-plus linear inequalities
\begin{equation}
\label{e:sysmain}
\begin{split}
a_{11}\otimes x_1\oplus\ldots\oplus 
a_{1n}\otimes x_n &\leq 
b_{11}\otimes x_1\oplus\ldots\oplus b_{1n}\otimes x_n,\\
a_{21}\otimes x_1\oplus\ldots\oplus a_{2n}\otimes x_n &\leq b_{21}\otimes x_1\oplus\ldots\oplus b_{2n}\otimes x_n.
\end{split}
\end{equation}
Here $\otimes:=+,$ $\oplus:=\max,$ and
$a_{ij},b_{ij},x_j\in\R\cup\{-\infty\}$ for $i=1,2$ and $j=1,\ldots,n$.

General systems of max-linear inequalities (equivalently, equalities)
were tackled by Butkovi\v{c} and Heged\"{u}s \cite{BH-84} who established
an elimination method for finding basic solutions of such systems,
starting with basic solutions of 
just one equation or inequality and adding all other constraints one by one.
This algorithm served as a proof that solution sets to 
max-linear systems have finite bases, and it did not
seem to be efficient enough for practical implementation.
But at present, Allamegeon, Gaubert and Goubault \cite{AGG-09} have come
up with a nouvelle approach to the scheme of \cite{BH-84}, in which
every step of adding new constraint is dramatically
improved by using a max-plus analogue of double description
method, and also a certain criterion of minimality estblished
in \cite{BSS-07,GK-07,NS-07II}, see also \cite{Ser-08}, which allows to efficiently test the extremality of a generator.

The idea of the present paper is that
when the number of inequalities is small,
the basic solutions can be written out explicitly.
However as shown by Wagneur, Truffet, Faye and Thiam 
\cite{WTFT}, even in the case of two inequalities \eqref{e:sysmain}
the number of generators is large and the problem
to establish a systematic classification and 
to resolve the extremality 
by writing out explicit conditions is nontrivial. This goal
is achieved in the present paper by 1) representing the set of
all solutions as the union of cones generated
by certain Kleene stars (Section
\ref{s:gather}), 2) selecting basic solutions by means
of the above mentioned criterion of minimality \cite{BSS-07,GK-07,NS-07II} (Section \ref{s:extr}) which
we call the multiorder principle \cite{Ser-08}.
This leads to explicit description of basic solutions
and to a procedure which finds all of them in no more than $O(n^3)$
operations.

\section{Gathering the generators}
\label{s:gather}

We work with the analogue of linear algebra
developed over the max-plus semiring $\R_{\max,+}$ which is
the set of real numbers with adjoined minus infinity $\RR=\R\cup\{-\infty\}$ equipped with the operations
of ``addition'' $a\oplus b:=\max(a,b)$ and 
``multiplication'' $a\otimes b:=a+b$. 
Zero $\bzero$ and unity $\bunity$ of this semiring are equal,
respectively, to $-\infty$ and $0$. 
The operations of the
semiring are extended to the nonnegative matrices and vectors in the
same way as in conventional linear algebra. That is if $A=(a_{ij})$,
$B=(b_{ij})$ and $C=(c_{ij})$ are matrices of compatible sizes with
entries from $\Rp$,
we write $C=A\oplus B$ if $c_{ij}=a_{ij}\oplus b_{ij}$ for all $i,j$ and $%
C=A\otimes B$ if $c_{ij}=\bigoplus_k a_{ik}\otimes
b_{kj}=\max_{k}(a_{ik}\otimes b_{kj})$ for all $i,j$. 
The notation $\otimes$ will be often omitted.

The main geometrical object of this max-plus linear algebra is a subset $K\in\RRn$ closed under the operations of componentwise maximization $\oplus$ and ``multiplication''
$\otimes$ by scalars (which means addition in the conventional sense). Such subsets are called
{\em max-plus cones} or just {\em cones} 
if there is no mix up with the ordinary convexity.

A vector $x\in\Rpn$ is a {\em (max-linear) combination} of $y_1,\ldots, y^m\in S$. A set $S\subseteq\Rpn$ is {\em generated} by 
$y^1,\ldots,y^m\in S$ if each $x\in S$ 
is a max-linear combination of $y^1,\ldots,y^m$. When
vectors arise as columns (resp. rows) of matrices, it will
be convenient to represent them as max-linear combinations
of the {\em column unit vectors}
\begin{equation}
\label{col-units}
e_i=(\overbrace{\bzero\ldots \bzero}^{i-1}\ \bunity\ \bzero\ldots \bzero)',
\end{equation}
respectively the {\em row unit vectors} $e'_i$, which are their 
transpose.

The following series is called the {\em Kleene star} of $A$:
\begin{equation}
\label{def-kls}
A^*=I\oplus A\oplus A^2\ldots,
\end{equation}
where $I$ is the max-plus unity matrix, which has all diagonal entries $\bunity$ and all off-diagonal entries $\bzero$. 
When $A^*$ has finite entries (in other words, converges)
it is easily shown that $A\otimes x\leq x$ is equivalent to 
$A^*\otimes x=x$. We also have the following.
\begin{proposition}
\label{klstar-gen}
Let $A\in\Rpn$ be such that $A^*$ has finite entries.
Then $\{x\mid A\otimes x\leq x\}$ is generated by the
columns of $A^*$.
\end{proposition}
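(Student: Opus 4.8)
The plan is to establish the set equality $\{x\mid A\otimes x\leq x\}=\{A^*\otimes y\mid y\in\Rpn\}$, which immediately gives the generation claim since the right-hand side is the cone generated by the columns of $A^*$. I would prove the two inclusions separately.

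\emph{The easy inclusion.} First I would recall the stated equivalence: when $A^*$ has finite entries, $A\otimes x\leq x$ is equivalent to $A^*\otimes x=x$. For one direction of the set equality, take $x$ with $A\otimes x\leq x$; then $x=A^*\otimes x$ exhibits $x$ as a max-linear combination of the columns of $A^*$, so $x$ lies in the cone they generate. I would also note briefly why the stated equivalence holds: if $Ax\leq x$ then by induction $A^kx\leq x$ for all $k\geq 0$ (applying $A\otimes(\cdot)$ preserves $\leq$), hence $A^*x=\bigoplus_{k\geq 0}A^kx\leq x$; conversely $A^*x\supseteq A\otimes A^*x$ componentwise would be needed, but in fact $A\otimes A^*=A^*\oplus A\ominus I$ style manipulations give $A\otimes A^*\leq A^*$, so $A^*x=x$ yields $Ax\leq A^*x=x$.

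\emph{The other inclusion.} Conversely, I must show every column of $A^*$ — indeed every $A^*\otimes y$ — satisfies $A\otimes(\cdot)\leq(\cdot)$. The key computation is $A\otimes A^*\leq A^*$. This follows from $A\otimes A^*=A\oplus A^2\oplus A^3\oplus\cdots=\bigoplus_{k\geq 1}A^k\leq\bigoplus_{k\geq 0}A^k=A^*$, where finiteness of $A^*$ guarantees all these sums are well defined. Then for any $y$, setting $x=A^*\otimes y$ we get $A\otimes x=A\otimes A^*\otimes y\leq A^*\otimes y=x$, so $x$ belongs to the solution set. Since a cone is closed under $\oplus$ and scalar $\otimes$, and each generator lies in the solution set (which is itself such a cone), the whole generated cone is contained in $\{x\mid A\otimes x\leq x\}$.

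\emph{Main obstacle.} There is no deep obstacle here; the proposition is essentially a repackaging of the Kleene star identity $A\otimes A^*\leq A^*$ together with $A^*\otimes A^*=A^*$ (idempotency), which one gets from $A^*=I\oplus A\otimes A^*$. The only point requiring a little care is making sure the infinite series manipulations are legitimate, which is exactly what the hypothesis "$A^*$ has finite entries" secures — each entry of $A^k$ is bounded above by the corresponding finite entry of $A^*$, so rearranging and comparing the series term-by-term is valid. I would state this finiteness remark once and then proceed with the two inclusions as above.
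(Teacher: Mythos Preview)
Your argument is correct. The paper does not actually supply a proof of this proposition: it states the equivalence ``$A\otimes x\leq x$ is equivalent to $A^*\otimes x=x$'' just before the statement and then leaves the proposition unproved, treating it as a standard fact about Kleene stars. Your write-up fills in precisely the expected details---the two inclusions via $A\otimes A^*\leq A^*$ and $I\leq A^*$---so it matches the approach the paper implicitly has in mind.

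One minor stylistic remark: the phrase ``$A^*\oplus A\ominus I$ style manipulations'' is informal and $\ominus$ is not a defined operation in this semiring; you can drop that aside entirely, since the clean argument you give immediately afterwards (namely $A\otimes A^*=\bigoplus_{k\geq 1}A^k\leq\bigoplus_{k\geq 0}A^k=A^*$, or simply $A\leq A^*$) is all that is needed.
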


This section will be based on the following two
observations. In the formulation we use the {\em row unit vectors} $e'_i$. We denote by $A_{i\cdot}$, resp. $A_{\cdot i}$,
the $i$th row, resp. the $i$th column, of $A$. 

\begin{proposition}
\label{verysimple}
Let $A\in\Rpnn$ have rows
\begin{equation}
\label{aicdot}
A_{i\cdot}=
\begin{cases}
e'_k\oplus \bigoplus_{l\neq k} a_{kl} e'_l, & \text{if $i=k$},\\
e'_i, \text{otherwise},
\end{cases}
\end{equation}
for $i=1,\ldots,n$. Then the set
$\{x\mid A\otimes x\leq x\}$ is generated by
the columns of $A$.
\end{proposition}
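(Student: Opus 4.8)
The plan is to exploit two structural features of the matrix $A$: first, $A\geq I$ entrywise (its diagonal entries are all $\bunity$ and every entry lies in $\Rp$, hence is $\geq\bzero$); second, $A$ is idempotent, $A\otimes A=A$. Granting these, the argument is short. Since $A\geq I$, for every $x\in\Rpn$ we have $A\otimes x\geq I\otimes x=x$, so the defining inequality $A\otimes x\leq x$ is in fact equivalent to the equality $A\otimes x=x$; thus the solution set is $\{x\mid A\otimes x=x\}$.

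Next I would check the two inclusions against ``generated by the columns of $A$''. For the easy direction, a max-linear combination of the columns $A_{\cdot 1},\ldots,A_{\cdot n}$ with coefficients $\lambda_1,\ldots,\lambda_n\in\Rp$ is just $A\otimes\lambda$, where $\lambda=(\lambda_1,\ldots,\lambda_n)'$, and idempotency gives $A\otimes(A\otimes\lambda)=(A\otimes A)\otimes\lambda=A\otimes\lambda$, so $A\otimes\lambda$ solves $A\otimes x\leq x$; in particular each column $A_{\cdot j}=A\otimes e_j$ is a solution. For the other direction, if $x$ is any solution then $x=A\otimes x=\bigoplus_j x_j\otimes A_{\cdot j}$, which exhibits $x$ as a max-linear combination of the columns of $A$. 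Together these give the proposition.

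It remains to verify the single computational fact $A\otimes A=A$, and this is where the special shape of $A$ enters. Every row of $A$ other than the $k$th is a unit row $e'_i$, so for $i\neq k$ the $i$th row of $A\otimes A$ is $\bigoplus_l A_{il}\otimes A_{l\cdot}=A_{ii}\otimes A_{i\cdot}=e'_i$ again. For the $k$th row, $(A\otimes A)_{k\cdot}=A_{kk}\otimes A_{k\cdot}\oplus\bigoplus_{l\neq k}a_{kl}\otimes e'_l=A_{k\cdot}\oplus\bigoplus_{l\neq k}a_{kl}e'_l=A_{k\cdot}$, using $A_{kk}=\bunity$ and $A_{k\cdot}=e'_k\oplus\bigoplus_{l\neq k}a_{kl}e'_l$. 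I do not anticipate a genuine obstacle; the only thing needing care is the bookkeeping of the distinguished index $k$ versus the unit rows in these index manipulations. (One could alternatively note that $A\geq I$ together with $A\otimes A=A$ forces $A^*=A$, so the statement follows from Proposition \ref{klstar-gen}; I prefer the self-contained computation above, since $A^*=A$ may still carry $\bzero$ off-diagonal entries in its $k$th row.)
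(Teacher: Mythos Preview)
Your proof is correct and is essentially the same approach as the paper's, which simply states ``In this case $A^*=A$, after which Proposition~\ref{klstar-gen} is applied.'' Your verification that $A\otimes A=A$ together with $A\geq I$ is exactly the content of $A^*=A$, and the direct argument you give (that $A\otimes x\leq x$ forces $x=A\otimes x=\bigoplus_j x_j A_{\cdot j}$) is just the proof of Proposition~\ref{klstar-gen} unpacked in this setting. Your parenthetical remark about possible $\bzero$ off-diagonal entries in $A^*$ is a fair nitpick: Proposition~\ref{klstar-gen} is stated for $A^*$ with finite entries, whereas here some $a_{kl}$ may equal $\bzero$, so your self-contained version sidesteps a small gap in the paper's phrasing.
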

\begin{proof}
In this case $A^*=A$, after which Proposition \ref{klstar-gen} is applied. 
\end{proof}

\begin{proposition}
\label{alsosimple}
Let $A\in\Rpnn$ have rows
\begin{equation}
\label{aicdot2}
A_{i\cdot}=
\begin{cases}
e'_k\oplus \bigoplus_{l\in L_1} a_{kl} e'_l, & \text{if $i=k$},\\
e'_m\oplus \bigoplus_{l\in L_2} a_{ml} e'_l, & \text{if $i=m$},\\
e'_i, \text{otherwise},
\end{cases}
\end{equation}
where $L_1=\{l\neq k\mid a_{kl}\neq \bzero\}$, 
$L_2=\{l\neq m\mid a_{kl}\neq \bzero\}$ and $k\neq m$.
\begin{itemize}
\item If $a_{km}a_{mk}\leq \bunity$ then $\{x\mid A\otimes x\leq x\}$
is generated by the columns of $A^*$.
\item If $a_{km}a_{mk}>\bunity$ then $\{x\mid A\otimes x\leq x\}$
is generated by $e_i$ for $i\notin L_1\cup L_2\cup\{k\}\cup\{m\}$.
\end{itemize}
\end{proposition}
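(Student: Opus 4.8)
The plan is to reduce the system $A\otimes x\leq x$ to scalar inequalities. By \eqref{aicdot2}, every row $i\notin\{k,m\}$ of $A$ equals $e'_i$, so the $i$-th inequality is the trivial $x_i\leq x_i$; the $k$-th and $m$-th inequalities unravel, respectively, to $a_{kl}\otimes x_l\leq x_k$ for all $l\in L_1$ and $a_{ml}\otimes x_l\leq x_m$ for all $l\in L_2$. Thus $\{x\mid A\otimes x\leq x\}$ is cut out by these finitely many scalar constraints, and the whole question is how they interact through the pair $(k,m)$.

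Next I would examine the weighted digraph of $A$: each node carries a loop of weight $\bunity$, node $k$ has arcs to the nodes of $L_1$, node $m$ has arcs to the nodes of $L_2$, and no other node emits a nontrivial arc. Hence every cycle through a node outside $\{k,m\}$ is a loop, and the only cycle of length at least two is $k\to m\to k$, present exactly when $a_{km}\neq\bzero\neq a_{mk}$ and of weight $a_{km}a_{mk}$. In the case $a_{km}a_{mk}\leq\bunity$ all cycles of the digraph then have weight $\leq\bunity$, so $A^*$ has finite entries (equivalently, converges), and Proposition \ref{klstar-gen} immediately yields that $\{x\mid A\otimes x\leq x\}$ is generated by the columns of $A^*$.

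For the case $a_{km}a_{mk}>\bunity$ I would argue that the solution set collapses to a coordinate subspace. Here $a_{km}$ and $a_{mk}$ are necessarily finite, so $m\in L_1$ and $k\in L_2$, giving the two inequalities $a_{km}\otimes x_m\leq x_k$ and $a_{mk}\otimes x_k\leq x_m$; multiplying the first by $a_{mk}$ and chaining with the second produces $a_{km}a_{mk}\otimes x_m\leq x_m$, which forces $x_m=\bzero$ because $a_{km}a_{mk}>\bunity$, and symmetrically $x_k=\bzero$. Substituting $x_k=x_m=\bzero$ back into the remaining constraints $a_{kl}\otimes x_l\leq x_k$ $(l\in L_1)$ and $a_{ml}\otimes x_l\leq x_m$ $(l\in L_2)$, whose coefficients are finite, forces $x_l=\bzero$ for every $l\in L_1\cup L_2$. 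Thus $\{x\mid A\otimes x\leq x\}$ consists precisely of the vectors supported off $L_1\cup L_2\cup\{k\}\cup\{m\}$, that is, the cone generated by $\{e_i\mid i\notin L_1\cup L_2\cup\{k\}\cup\{m\}\}$.

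The only part needing care is the first case: making the digraph/cycle count precise and invoking the standard fact that a max-plus matrix has a converging Kleene star iff it has no cycle of weight exceeding $\bunity$. The second case is then routine, provided one stays attentive to the arithmetic conventions $\bzero=-\infty$ while propagating $x_m=\bzero\Rightarrow x_k=\bzero\Rightarrow x_l=\bzero$; neither step presents a genuine difficulty.
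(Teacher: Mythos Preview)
Your proof is correct and follows essentially the same approach as the paper: invoke Proposition~\ref{klstar-gen} after checking that $A^*$ converges in the first case, and show the solution set collapses to the coordinate subspace in the second. You supply more detail than the paper does---in particular your digraph analysis explaining \emph{why} the only nontrivial cycle is $k\to m\to k$, and your explicit chaining $a_{km}a_{mk}\otimes x_m\leq x_m\Rightarrow x_m=\bzero\Rightarrow x_k=\bzero\Rightarrow x_l=\bzero$---where the paper simply asserts that $A^*$ is finite and that $x_i\neq\bzero$ for $i\in L_1\cup L_2\cup\{k,m\}$ forces $x_k,x_m\neq\bzero$ and hence a contradiction.
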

\begin{proof}
In the first case $A^*$ is finite and we apply Proposition 
\ref{klstar-gen}. For the second case observe that on one hand,
if $x_i\neq \bzero$ for some $i\in L_1\cup L_2\cup\{k\}\cup\{m\}$
then $x_k\neq \bzero$ and $x_m\neq \bzero$ which makes $A\otimes x\leq x$ 
impossible. On the other hand, any $x$ such that $x_i=\bzero$
for all $i\in L_1\cup L_2\cup\{k\}\cup\{m\}$ satisfies
$A\otimes x\leq x$.
\end{proof}

Now we proceed with writing out a generating set for solutions of
\eqref{e:sysmain}.
We denote $J_1:=\{i\mid a_{1i}\leq b_{1i},\ b_{1i}\neq \bzero\}$,
$J_2:=\{i\mid a_{2i}\leq b_{2i},\ b_{2i}\neq \bzero\}$, $I_1:=\{i\mid
a_{1i}>b_{1i}\}$ and $I_2:=\{i\mid a_{2i}>b_{2i}\}$. With this,
system \eqref{e:sysmain} is equivalent to
\begin{equation}
\label{e:sysred}
\begin{split}
\bigoplus_{i\in I_1} a_{1i}x_i& \leq \bigoplus_{i\in J_1} b_{1i} x_i,\\
\bigoplus_{i\in I_2} a_{2i}x_i& \leq \bigoplus_{i\in J_2} b_{2i} x_i.
\end{split}
\end{equation}
The solution set to \eqref{e:sysred} is the union of $S^{kl}$ defined by
\begin{equation}
\label{e:skldef}
S^{kl}=\{x\mid \bigoplus_{i\in I_1} a_{1i}x_i\leq b_{1k}x_k,\ \bigoplus_{i\in I_2} a_{2i}x_i\leq b_{2l}x_l\},
\end{equation}
for $k\in J_1$ and $l\in J_2$.
Further we represent $S^{kl}$ defined by \eqref{e:skldef}
in the form 
\begin{equation}
\label{skl-eig}
S^{kl}=\{x\mid A^{kl}\otimes x\leq x\},
\end{equation}
where we have to describe $A^{kl}$. There are two cases: $k=l$ and $k\neq l$. We denote $\gamma_{ki}^1:=b_{1k}^{-1}a_{1i}$ 
and $\gamma_{ki}^2:=b_{2k}^{-1}a_{2i}$. We also denote
$\oI_1:=\{1,\ldots,n\}\bez I_1$ and 
$\oI_2:=\{1,\ldots,n\}\bez I_2$. Observe
that $J_1\subseteq \oI_1$ and
$J_2\subseteq \oI_2$ (the containment may not be 
strict in general).

If $k=l$, then the $k$th row of $A^{kl}$ is 
\begin{equation}
\label{rows-akl1}
e'_k\oplus\bigoplus_{i\in I_1\cap\oI_2} \gamma_{ki}^1 e'_i\oplus
\bigoplus_{i\in\oI_1\cap I_2} \gamma_{ki}^2 e'_i\oplus\bigoplus_{i\in I_1\cap I_2} (\gamma_{ki}^1\oplus\gamma_{ki}^2)e'_i.
\end{equation}
and all other rows are row unit vectors.

If $k\neq l$ then the $k$th and the $l$th rows of $A^{kl}$ are 
given by
\begin{equation}
\label{rows-akl2}
e'_k\oplus\bigoplus_{i\in I_1} \gamma_{ki}^1 e'_i, \quad e'_l\oplus\bigoplus_{i\in I_2}\gamma_{li}^2 e'_i,
\end{equation}
all other rows being row unit vectors.

Now we collect the generators of $\{x\mid A^{kl}\otimes x\leq x\}$
considering several special cases.

{\em Case 1.} $k=l\in J_1\cap J_2$. 
The $k$th row of $A^{kl}$ is given by \eqref{rows-akl1} and
all other rows of $A^{kl}$ are unit vectors. By Proposition
\ref{verysimple}, $S^{kl}$ is generated by the
columns of $A^{kl}$. These are:
\begin{equation}
\label{cols-akl1}
\begin{split} 
& e_i,\  i\in\oI_1\cap\oI_2,\\
&\gamma_{ki}^1e_k\oplus e_i,\  k\in J_1\cap J_2,\ i\in I_1\cap\oI_2,\\ 
&\gamma_{li}^2e_l\oplus e_i,\  l\in J_1\cap J_2,\ i\in\oI_1\cap I_2,\\
&(\gamma_{ki}^1\oplus\gamma_{ki}^2)e_k\oplus e_i,\ k\in J_1\cap J_2,
\ i\in I_1\cap I_2.
\end{split}
\end{equation}

{\em Case 2.} $k\neq l$, $k\in J_1\cap\oI_2$, $l\in J_2\cap\oI_1$.\\
Rows $k$ and $l$ of $A^{kl}$ are given by \eqref{rows-akl2},
all other rows being the unit vectors.
As $l\in\oI_1$ and $k\in\oI_2$, we obtain
$A^{kl}_{kl}=\gamma_{kl}^1=\bzero$ and $A^{kl}_{lk}=\gamma_{lk}^2=\bzero$
and hence $(A^{kl})^*=A^{kl}$. 
Making transpose of \eqref{rows-akl2},
we obtain the columns of $(A^{kl})^*=A^{kl}$. 
By Proposition \ref{alsosimple} part 1. they generate  $S^{kl}$:
\begin{equation}
\label{cols-akl2}
\begin{split}
& e_i,\ i\in\oI_1\cap\oI_2,\\
& \gamma_{ki}^1e_k\oplus e_i,\ k\in J_1\cap\oI_2,\ i\in I_1\cap\oI_2,\\
& \gamma_{li}^2e_l\oplus e_i,\ l\in J_2\cap\oI_1,\ i\in\oI_1\cap I_2,\\ 
& \gamma_{ki}^1 e_k\oplus\gamma_{li}^2 e_l\oplus e_i,\ 
k\in J_1\cap\oI_2,\ l\in J_2\cap\oI_1,\ i\in I_1\cap I_2.
\end{split}
\end{equation}

{\em Case 3.} $k\in J_1\cap I_2$, $l\in J_2\cap\oI_1$.\\
Rows $k$ and $l$ of $A^{kl}$ are given by
\eqref{rows-akl2}. However, $(A^{kl})^*\neq A^{kl}$, since 
$k\in I_2$ implying that 
$A_{lk}^{kl}=\gamma_{lk}^2\neq \bzero$.
Note that $(A^{kl})^*$ is always finite, 
since $A_{kl}^{kl}=\bzero$ implying that the associated digraph of $A^{kl}$ does not contain any cycles with nonzero weight except
for the loops $(i,i)$. 
For $i\in I_1$,
we obtain 
$(A^{kl})^*_{li}=\gamma_{li}^2\oplus\gamma_{lk}^2\gamma_{ki}^1$. 
More precisely,
$(A^{kl})^*_{li}=\gamma_{lk}^2\gamma_{ki}^1$ for $i\in I_1\cap\oI_2$, and
$(A^{kl})^*_{li}=\gamma^2_{li}\oplus\gamma^2_{lk}\gamma_{ki}^1$ 
for $i\in I_1\cap I_2$.
The $l$th row of $(A^{kl})^*$ is given by
\begin{equation}
\label{rows-akl3}
%\begin{split}
e'_l\oplus\bigoplus_{i\in\oI_2\cap I_1} 
\gamma_{lk}^2\gamma_{ki}^1e'_i\oplus
\bigoplus_{i\in I_2\cap I_1} 
(\gamma_{li}^2\oplus\gamma_{lk}^2\gamma_{ki}^1)e'_i
\oplus\bigoplus_{i\in\oI_1\cap I_2}\gamma_{li}^2 e'_i.
%\end{split}
\end{equation}
The $k$th row of $(A^{kl})^*$ is the same as in \eqref{rows-akl2} and all other rows are unit vectors.
We obtain the columns of $(A^{kl})^*$:
\begin{equation}
\label{cols-akl3}
\begin{split}
& e_i,\  \text{for $i\in\oI_1\cap\oI_2$}\\
& e_i\oplus\gamma_{ki}^1e_k\oplus \gamma_{lk}^2\gamma_{ki}^1 e_l,\ 
k\in J_1\cap I_2,\ l\in J_2\cap\oI_1,\ i\in\oI_2\cap I_1\\
& e_i\oplus\gamma_{ki}^1e_k\oplus(\gamma_{li}^2\oplus\gamma_{lk}^2\gamma_{ki}^1)e_l,\ 
k\in J_1\cap I_2,\ l\in J_2\cap\oI_1,\ i\in I_1\cap I_2,\\ 
& e_i\oplus\gamma_{li}^2 e_l,\ l\in J_2\cap\oI_1,\ i\in\oI_1\cap I_2.
\end{split}
\end{equation}

{\em Case 4.} $k\in J_1\cap\oI_2$, $l\in J_2\cap I_1$.\\
Rows $k$ and $l$ are given by \eqref{rows-akl2}, and by analogy with Case 3 we obtain
that the $l$th row of $(A^{kl})^*$ is the same as in \eqref{rows-akl2}, but the $k$th row
is given by
\begin{equation}
\label{rows-akl4}
\begin{split}
e'_k \oplus\bigoplus_{i\in\oI_1\cap I_2} \gamma_{kl}^1\gamma_{li}^2e'_i\oplus
\bigoplus_{i\in I_2\cap I_1} (\gamma_{ki}^1\oplus\gamma_{kl}^1\gamma_{li}^2)e'_i
\oplus\bigoplus_{i\in\oI_2\cap I_1}\gamma_{ki}^1 e'_i.
\end{split}
\end{equation}
We obtain the columns of $(A^{kl})^*$:
\begin{equation}
\label{cols-akl4}
\begin{split}
& e_i,\ i\in\oI_1\cap\oI_2\\
& e_i\oplus\gamma_{li}^2e_l\oplus \gamma_{kl}^1\gamma_{li}^2 e_k,\ 
k\in J_1\cap\oI_2,\ l\in J_2\cap I_1,\ i\in\oI_1\cap I_2\\
& e_i\oplus\gamma_{li}^2e_l\oplus(\gamma_{ki}^1\oplus\gamma_{kl}^1\gamma_{li}^2)e_k,\ 
k\in J_1\cap\oI_2,\ l\in J_2\cap I_1,\ i\in I_1\cap I_2,\\ 
& e_i\oplus\gamma_{ki}^1 e_k,\
k\in J_1\cap\oI_2,\ i\in\oI_2\cap I_1.
\end{split}
\end{equation}

{\em Case 5.}  $k\in J_1\cap I_2$, $l\in J_2\cap I_1$.\\
If $\gamma_{lk}^2\gamma_{kl}^{1}\leq \bunity$, then
the $l$th row of $(A^{kl} )^*$ is given by \eqref{rows-akl3} 
and the $k$th row of $(A^{kl})^*$ is given by
\eqref{rows-akl4}. By Proposition \ref{alsosimple} part 1
the columns of $(A^{kl})^*$ generate $S^{kl}$. 
If $\gamma_{lk}^2\gamma_{kl}^1>\bunity$, then by Proposition
\ref{alsosimple} part 2, $S^{kl}$
is generated by $e_i$ for $i\in\oI_1\cap\oI_2$. 
If $\gamma_{lk}^2\gamma_{kl}^1\leq \bunity$, then
$(A^{kl})^*$ is finite and its columns are: 
\begin{equation}
\label{cols-akl5}
\begin{split}
& e_i,\ i\in\oI_1\cap\oI_2,\\
& e_l\oplus\gamma_{kl}^1e_k,\ e_k\oplus\gamma_{lk}^2e_l,\ 
k\in J_1\cap I_2,\ l\in J_2\cap I_1,\\
& e_i\oplus\gamma_{li}^2e_l\oplus \gamma_{kl}^1\gamma_{li}^2 e_k,\  
k\in J_1\cap I_2,\ l\in J_2\cap I_1,\ i\in\oI_1\cap I_2,\\
& e_i\oplus(\gamma_{li}^2\oplus\gamma_{lk}^2\gamma_{ki}^1)e_l\oplus(\gamma_{ki}^1\oplus\gamma_{kl}^1\gamma_{li}^2)e_k,\ 
k\in J_1\cap I_2,\ l\in J_2\cap I_1,\ i\in I_1\cap I_2,\\ 
& e_i\oplus\gamma_{ki}^1e_k\oplus\gamma_{lk}^2\gamma_{ki}^1 e_l,\ 
k\in J_1\cap I_2,\ l\in J_2\cap I_1,\ i\in\oI_2\cap I_1.
\end{split}
\end{equation}

\section{Identifying the basic solutions}
\label{s:extr}

A set $S\subseteq\Rpn$ is said to be {\em independent}
if no vector in this set is generated by other vectors in this
set. If such independent set generates a cone $K$ then it is called
a {\em basis} of $K$. It can be shown \cite{BSS-07,Wag-91} 
that if a basis of $K$ exists, then it consists of all
{\em extremals} (normalized in some sense): a vector
$x\in K$ is an extremal if $x=y\oplus z$ and $y,z\in K$ imply
$y=x$ or $z=x$. This also means that the basis of any cone is 
essentially unique: any two bases are obtained
from each other by multiplying their
elements by scalars. Importantly,
any finitely generated cone has a basis \cite{BSS-07,Wag-91}.

The notion of extremal defined above is a max-plus analogue
of the notion of extremal ray (or extremal) of a convex cone. It is also a special case of the join irreducible element of a lattice.

The extremality is most conveniently expressed by the following
multiorder principle \cite{BSS-07,GK-07,NS-07II,Ser-08} 
which we formulate
here only for finitely generated case. For any $i=1,\ldots,n$ we introduce the
relation
\begin{equation}
\label{leqi}
x\leq_i y\Rightarrow xx_i^{-1}\leq yy_i^{-1},\ \text{$x_i\neq \bzero$ 
and $y_i\neq \bzero$.}
\end{equation}
A vector $y\in K$ minimal with respect to $\leq_i$ will
be called {\em $i$-minimal}.

\begin{proposition}[Multiorder Principle]
Let $K\subseteq\Rpn$ be generated by a finite
set $S\subseteq\Rpn.$ Then $y\in S$ belongs to the basis
of $K$ (equivalently, is an extremal of $K$) 
if and only if it is $i$-minimal for some $i\in\{1,\ldots, n\}$.
\end{proposition}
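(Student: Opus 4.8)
The plan is to prove both implications of the equivalence directly from the definitions of extremality and $i$-minimality, working with the finite generating set $S$. First I would establish the easier direction: if $y\in S$ is \emph{not} extremal, then it fails to be $i$-minimal for every $i$. Since $y$ is not extremal, $y=u\oplus v$ with $u,v\in K$ and $u\neq y\neq v$; because $S$ generates $K$, we may expand $u$ and $v$ as max-linear combinations of elements of $S$, and collecting terms we get $y=\bigoplus_{j} \lambda_j z^j$ with each $z^j\in S$, $\lambda_j\neq\bzero$, and such that no single term equals $y$ (this uses $u,v\neq y$; one must argue that if some term did equal $y$ it could be absorbed). Now fix any $i$ with $y_i\neq\bzero$. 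Then $y_i=\bigoplus_j\lambda_j z^j_i$, so there is some index $j$ with $\lambda_j z^j_i=y_i$, in particular $z^j_i\neq\bzero$; for that term, $\lambda_j z^j\leq y$ componentwise gives $z^j (z^j_i)^{-1}\leq y y_i^{-1}$, i.e.\ $z^j\leq_i y$, while $z^j\neq y$. Hence $y$ is not $i$-minimal. As $i$ was arbitrary (and for $i$ with $y_i=\bzero$ the relation $\leq_i$ does not apply), $y$ is $i$-minimal for no $i$.

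For the converse I would show the contrapositive: if $y$ is not $i$-minimal for any $i$, then $y$ is not extremal. For each $i$ with $y_i\neq\bzero$, non-$i$-minimality gives some $z^{(i)}\in K$ (which we may take in $S$, by minimizing $\leq_i$ over the finite set $S$ — here I would note that an $\leq_i$-minimal element of $S$ is automatically $\leq_i$-minimal in $K$ since $S$ generates $K$ and any combination dominates one of its terms in the $\leq_i$ sense) with $z^{(i)}\leq_i y$ and $z^{(i)}\neq y$, normalized so that $z^{(i)}_i=y_i$ and hence $z^{(i)}\leq y$. Set $w=\bigoplus_{i:\,y_i\neq\bzero} z^{(i)}$. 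Then $w\leq y$, and for each coordinate $i$ with $y_i\neq\bzero$ we have $w_i\geq z^{(i)}_i=y_i$, so $w=y$. Thus $y=\bigoplus_i z^{(i)}$ is a max-combination of elements of $S$ each of which is $\leq y$ and none of which equals $y$. Finally, grouping the terms of this sum into two nonempty blocks $y=u\oplus v$ with $u,v\in K$ and $u,v\neq y$ (possible whenever at least two distinct terms appear; if only one distinct term $z$ appears then $y=z$, contradicting $z\neq y$) exhibits $y$ as non-extremal.

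The main obstacle is the bookkeeping around normalization and the ``if one term already equals $y$'' edge cases: one must be careful that expanding a vector of $K$ as a combination over $S$ and then discarding or rescaling terms does not accidentally reintroduce $y$ itself as a summand, and that the two-block split $y=u\oplus v$ genuinely has both parts distinct from $y$. A clean way to handle this is to phrase everything in terms of: \emph{$y\in S$ is extremal iff $y$ cannot be written as $\bigoplus_{z\in S,\ z\neq y}\lambda_z z$}, and then observe that such a representation exists iff for every $i$ with $y_i\neq\bzero$ some $z\neq y$ in $S$ satisfies $\lambda_z z_i=y_i$ with $\lambda_z z\leq y$, which is exactly the failure of $i$-minimality at coordinate $i$. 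Since every earlier result we need (finiteness of the basis, the fact that $S$ generates $K$, Proposition~\ref{klstar-gen}) is available, no further machinery is required. I would also remark that in the infinite-dimensional or non-finitely-generated setting the statement fails, which is why the formulation is restricted to finitely generated $K$; this justifies the hypothesis rather than being part of the proof.
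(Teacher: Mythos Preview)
Your proof is correct and follows essentially the same approach as the paper: both directions hinge on the identity $y=\bigoplus_{i\in\supp(y)} z^{(i)}(z^{(i)}_i)^{-1}y_i$ when each $z^{(i)}\leq_i y$, and on picking, for each coordinate $i$, a term of a max-combination that attains the maximum at $i$. Your write-up is in fact more careful than the paper's about the edge cases (terms proportional to $y$, the final two-block split witnessing non-extremality), which the paper's short proof leaves implicit.
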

\begin{proof}
If $y$ is not $i$-minimal for any $i$, then for each
$i\in\supp(y)$ there exists $z^i$ such that $z^i\leq_i y$.
Then it can be verified that
\begin{equation}
\label{e:maxcomb11}
y=\bigoplus_{i\in\supp(y)} z^i (z_i^i)^{-1} y_i.
\end{equation}
Conversely if $y=\bigoplus_k \alpha_k z^k$ for some $z^k\in S$,
then for each $i\in\supp(y)$ there is $k(i)$ such that
$y_i=\alpha_{k(i)}z_i^{k(i)}$ and as 
$y_j\geq \alpha_{k(i)} z_j^{k(i)}$ for all $j$ it follows
that $z^{k(i)}\leq_i y$ and $y$ is not $i$-minimal for any $i$.
\end{proof}
 
Next we classify all generators obtained in \eqref{cols-akl1},
\eqref{cols-akl2}, \eqref{cols-akl3},
\eqref{cols-akl4} and \eqref{cols-akl5} and give 
procedures for checking their extremality.
We start with unit vectors and combinations of two unit
vectors.

\vskip 1cm
$S_1$. $e_i,$ $\forall i\in\oI_1\cap\oI_2$.\\
$S_{2A1}$. $\phi_{ik}=\gamma_{ki}^1 e_k\oplus e_i,$ $\forall k\in J_1\cap\oI_2,$ $i\in I_1\cap\oI_2$.\\
$S_{2A2}$. $\phi_{ik}=\gamma_{ki}^2 e_k\oplus e_i,$ $\forall k\in J_2\cap\oI_1,$ $i\in I_2\cap\oI_1$.\\
$S_{2B}$. $\phi_{ik}=(\gamma_{ki}^1\oplus\gamma_{ki}^2)e_k\oplus e_i,$ $\forall k\in J_1\cap J_2,$ $i\in I_1\cap I_2$.\\
$S_{2C}$. $\phi_{lk}=\gamma_{kl}^1e_k\oplus e_l$ and $\phi_{kl}:=\gamma_{lk}^2e_l\oplus e_k,$ $\forall k\in J_1\cap I_2$, $l\in J_2\cap I_1$
such that $\gamma_{kl}^1\gamma_{lk}^2\leq \bunity$.\\

All vectors in $S_1$, $S_{2A}$ and $S_{2B}$ belong to the basis. 
Vectors in $S_{2C}$ belong to the basis
whenever they exist. For this, we determine the sets
\begin{equation}
\label{w-def}
\begin{split}
W&:=\{(k,l)\mid k\in J_1\cap I_2,\; l\in J_2\cap I_1,\;
\gamma_{kl}^1\gamma_{lk}^2\leq \bunity\}\\ \oW&:=\{(k,l)\mid k\in J_1\cap
I_2,\; l\in J_2\cap I_1,\; \gamma_{kl}^1\gamma_{lk}^2>\bunity\}
\end{split}
\end{equation}
Then, $\phi_{kl},\phi_{lk}\in S_{3C}$ exist whenever $(k,l)\in W$.
Note that if $\gamma_{kl}^1\gamma_{lk}^2=\bunity$ then $\phi_{kl}$ and
$\phi_{lk}$ are multiples of each other so that one of them can be
removed.

We proceed with combinations of three unit vectors.
Denote $K_1=\{i\mid a_{1i}=b_{1i}=\bzero\}$ and 
$K_2=\{i\mid a_{2i}=b_{2i}=\bzero\}$. Note that
$\{1,\ldots,n\}=I_1\cup J_1\cup K_1=I_2\cup J_2\cup K_2$.

\vskip 1cm
$S_{3A}$. $\psi_{ikl}=\gamma_{ki}^1e_k\oplus\gamma_{li}^2e_l\oplus e_i$ for $k\in J_1\cap\oI_2$,
$l\in J_2\cap\oI_1$, $i\in I_1\cap I_2$.\\

For all $i\in I_1\cap I_2$ determine the sets
\begin{equation}
\label{l-def}
\begin{split}
L_1(i)&:=\{k\in J_1\cap J_2\mid \gamma_{ki}^1<\gamma_{ki}^2\},\\
L_2(i)&:=\{l\in J_1\cap J_2\mid \gamma_{li}^2<\gamma_{li}^1\}.
\end{split}
\end{equation}
Then, $\psi_{ikl}\in S_{3A}$ belongs to the basis whenever
\begin{equation}
\label{3acond}
k\in(J_1\cap K_2)\cup L_1(i),\  l\in(J_2\cap K_1)\cup L_2(i).
\end{equation}

\vskip 1cm
\noindent $S_{3B1}$. $\psi_{ikl}=\gamma_{kl}^1\gamma_{li}^2e_k\oplus\gamma_{li}^2e_l\oplus e_i,$ $\forall k\in J_1\cap \oI_2$, $l\in J_2\cap I_1$,
$i\in I_2\cap \oI_1$.\\
$S_{3B2}$. $\psi_{ikl}=\gamma_{lk}^2\gamma_{ki}^1e_l\oplus\gamma_{ki}^1e_k\oplus e_i,$ $\forall k\in J_1\cap I_2$, $l\in J_2\cap\oI_1$,
$i\in\oI_2\cap I_1$.\\

For all $i\in I_2\cap\oI_1$, $l\in J_2\cap I_1$, determine the sets
\begin{equation}
\label{mil-def}
M_1(i,l):=\{t\in J_1\cap J_2\mid \gamma_{tl}^1\gamma_{li}^2<\gamma_{ti}^2\}.
\end{equation}
For all $i\in I_1\cap\oI_2$, $k\in J_1\cap I_2$, determine the sets
\begin{equation}
\label{mik-def}
M_2(i,k):=\{t\in J_1\cap J_2\mid \gamma_{tk}^2\gamma_{ki}^1<\gamma_{ti}^1\}.
\end{equation}
A vector in $\psi_{ikl}\in S_{3B1}$ 
(resp. $\psi_{ikl}\in S_{3B2}$) 
belongs to the basis if and only if
the following two conditions are satisfied:\\
1. $i\in I_2\cap K_1$ or $(i,l)\in\oW$
(resp. $i\in I_1\cap K_2$ or $(k,i)\in\oW$),\\
2. $k\in M_1(i,l)$ or $k\in J_1\cap K_2$ 
(resp. $l\in M_2(i,k)$ or $l\in J_2\cap K_1$).

\vskip 1cm

\noindent 
$S_{3C1}$. $\psi_{ikl}=(\gamma_{ki}^1\oplus\gamma_{kl}^1\gamma_{li}^2)e_k\oplus\gamma_{li}^2e_l\oplus e_i,$ $\forall k\in J_1\cap \oI_2$, $l\in J_2\cap I_1$,
$i\in I_2\cap I_1$.\\
$S_{3C2}$. $\psi_{ikl}=(\gamma_{li}^2\oplus\gamma_{lk}^2\gamma_{ki}^1)e_l\oplus\gamma_{ki}^1e_k\oplus e_i,$ $\forall k\in J_1\cap I_2$,
$l\in J_2\cap\oI_1$,
$i\in I_2\cap I_1$.\\

For all $i\in I_1\cap I_2$, $l\in J_2\cap I_1$, $k\in J_1\cap I_2$, determine the sets
\begin{equation}
\label{n-def}
\begin{split}
N_1(i,l):&=\{t\in J_1\cap J_2\mid \gamma_{ti}^1\oplus\gamma_{tl}^1\gamma_{li}^2<\gamma_{ti}^1\oplus\gamma_{ti}^2\}=\\
&=\{t\in L_1(i)\mid \gamma_{tl}^1\gamma_{li}^2<\gamma_{ti}^2\},\\
N_2(i,k):&=\{t\in J_1\cap J_2\mid \gamma_{ti}^2\oplus\gamma_{tk}^2\gamma_{ki}^1<\gamma_{ti}^1\oplus\gamma_{ti}^2\}=\\
&=\{t\in L_2(i)\mid \gamma_{tk}^2\gamma_{ki}^1<\gamma_{ti}^1\}.
\end{split}
\end{equation}
Then, $\psi_{ikl}\in S_{3C1}$ (resp. $\psi_{ikl}\in S_{3C2}$)
belongs to the basis if and only if
$k\in (J_1\cap K_2)\cup N_1(i,l)$
(resp. $l\in (J_2\cap K_1\cup N_2(i,k)$).

\vskip 1cm
$S_{3D1}$. $\psi_{ikl}=\gamma_{kl}^1\gamma_{li}^2e_k\oplus\gamma_{li}^2e_l\oplus e_i,$ $\forall k\in J_1\cap I_2$, $l\in J_2\cap I_1$,
$i\in I_2\cap \oI_1$ such that $\gamma_{kl}^1\gamma_{lk}^2\leq \bunity$.\\
$S_{3D2}$. $\psi_{ikl}=\gamma_{lk}^2\gamma_{ki}^1e_l\oplus\gamma_{ki}^1e_k\oplus e_i,$ $\forall k\in J_1\cap I_2$, $l\in J_2\cap I_1$,
$i\in\oI_2\cap I_1$ such that $\gamma_{kl}^1\gamma_{lk}^2\leq \bunity$.\\
$S_{3E}$. $\psi_{ikl}=(\gamma_{li}^2\oplus\gamma_{lk}^2\gamma_{ki}^1)e_l\oplus(\gamma_{ki}^1\oplus\gamma_{kl}^1\gamma_{li}^2)e_k\oplus e_i$,
$\forall k\in J_1\cap I_2$, $l\in J_2\cap I_1$, $i\in I_1\cap I_2$ such that $\gamma_{kl}^1\gamma_{lk}^2\leq \bunity$.\\

Provided that $(k,l)\in W$, vector $\psi_{ikl}\in S_{3D1}$ 
(resp. $\psi_{ikl}\in S_{3D2}$) belongs to the
basis if and only if $i\in K_1\cap I_2$ or 
$(i,l)\in\oW$ 
(resp. $i\in I_1\cap K_2$ or $(k,i)\in\oW$), and
$\psi_{ikl}\in S_{3E}$ always belong to the basis.

Below we explain why the above procedure yields the basis.
We denote by $S_1$ the set of all generators $e_i$ for $i\in\oI_1\cup\oI_2$,
by $S_2$ the set of all $2$-generators $\phi_{ik}$ and $\phi_{kl}$, and by $S_3$ the set
of all $3$-generators $\psi_{ikl}$.

$S_1,S_2$:\\  
The supports of all generators in $S_1\cup S_2$ are different, except for the
pairs of generators in $S_{2C}$, which exist if and only if $\gamma_{kl}^1\gamma_{lk}^2\leq \bunity$,
and are multiples of each other if and only if
$\gamma_{kl}^1\gamma_{lk}^2=\bunity$. Removing one vector from every such proportional
pair in $S_{2C}$ yields an independent set. Evidently, vectors in $S_1\cup S_2$ cannot be generated
with help of vectors in $S_3$, and this completes the explanation.

For the rest of the cases, first note that
the supports of all generators in $S_3$ are different and hence the set $S_3$ is independent.
It can only happen that the vectors in $S_3$ are linear combinations of the vectors in $S_1$ and $S_2$.

$S_{3A}$:\\ 
A vector $\psi_{ikl}\in S_{3A}$ may be a combination of vectors in $S_1$ and $S_{2B}$, as the
supports of some generators in these sets are contained in the support of a vector in $S_{3A}$.
By the minimality principle, a vector $\psi_{ikl}$ is extremal if and only if it is $i$-, $k$- or $l$-minimal.
Then, $\psi_{ikl}$ can be neither $k$- nor $l$-minimal since for all $k,l\in\oI_1\cap\oI_2$ the only
minimal generators are $e_k$ and $e_l$. The $i$-minimality of $\psi_{ikl}\in S_{3A}$ can be prevented only by
$\phi_{ki}\in S_{2B}$ or $\phi_{li}\in S_{2B}$. Condition \eqref{3acond} describes the situation when this
does not happen.

$S_{3B}$:\\
A vector $\psi_{ikl}\in S_{3B}$ can be a max combination of vectors in
$S_1$, $S_{2A}$ and $S_{2C}$ due to the inclusion of supports. Again, $\psi_{ikl}$ can
be neither $k$- nor $l$-minimal, since it can be represented as a combination
of $e_i$ and a vector from $S_{2A1}$ (resp. $S_{2A2}$) in the
case of $S_{3B1}$ (resp. $S_{3B2}$):
\begin{equation}
\begin{split}
\gamma_{kl}^1\gamma_{li}^2 e_k\oplus\gamma_{li}^2 e_l\oplus e_i&=\gamma_{li}^2(\gamma_{kl}^1e_k\oplus e_l)\oplus e_i.\\
\gamma_{lk}^2\gamma_{ki}^1 e_l\oplus\gamma_{ki}^1 e_k\oplus e_i&=\gamma_{ki}^1(\gamma_{lk}^2e_l\oplus e_k)\oplus e_i.
\end{split}
\end{equation}
Next we describe the 2-generators which can prevent the $i$-minimality of $\psi_{ikl}\in S_{3B1}$ (resp.
$\psi_{ikl}\in S_{3B2}$).\\ 
1. $\phi_{il},\phi_{li}\in S_{2C}$ (resp.  $\phi_{ki},\phi_{ik}\in
S_{2C}$). These 2-generators do not arise
only if $i\in K_1$ for $S_{3B1}$ (resp. $i\in K_2$ for
$S_{3B2}$), for then there is no vector in $S_{2C}$ whose support is
a subset of the support of $\psi_{ikl}$, or if the corresponding
pair $\phi_{il},\phi_{li}\in S_{2C}$ (resp. $\phi_{ki},\phi_{ik}\in
S_{2C}$) does not exist meaning $(i,l)\in\oW$ (resp. $(k,i)\in\oW$).\\ 
2. $\phi_{ik}\in S_{2A2}$ (resp. $\phi_{il}\in S_{2A1}$).
These vectors do not arise only if $k\in K_2$ (resp. $l\in K_1$),
because then $k\notin J_2$ (resp. $l\notin J_1$) unlike in
the case of $S_{2A2}$ (resp. $S_{2A1}$). Otherwise, $\phi_{ik}$
(resp. $\phi_{il}$) are not dangerous, i.e., they do not precede
$\psi_{ikl}$ with respect to $\leq_i$ only if $k\in M_1(i,l)$
(resp. $l\in M_2(i,k)$), see \eqref{mil-def} and \eqref{mil-def}.

$S_{3C}$:\\
A vector $\psi_{ikl}\in S_{3C}$ can be a max combination of vectors in
$S_1$, $S_{2A}$ and $S_{2B}$. Again, $\psi_{ikl}$ can be neither $k$- nor $l$- minimal.
Indeed,
\begin{equation}
\label{case4-lindep}
\begin{split}
\psi_{ikl}&=\gamma_{ki}^1e_k\oplus e_i\oplus\gamma_{li}^2(\gamma_{kl}^1e_k\oplus e_l),\ S_{3C1}\\
\psi_{ikl}&=\gamma_{li}^2e_l\oplus e_i\oplus\gamma_{ki}^1(\gamma_{lk}^2e_l\oplus e_k),\ S_{3C2}
\end{split}
\end{equation}
where the vectors in brackets belong to $S_{2A1}$ and $S_{2A2}$ respectively. The first vector
cannot be $k$-minimal since $k\in\oI_1\cap\oI_2$, and it cannot be $l$-minimal as it to loses
$\gamma_{kl}^2e_k\oplus e_l\in S_{2A1}$.
The second vector cannot be $l$-minimal since $l\in\oI_1\cap\oI_2$, and it cannot be
$k$-minimal as it loses to $\gamma_{lk}^2e_l\oplus e_k\in S_{2A2}$.  The remaining possibility of being
$i$-minimal can be destroyed by vectors from $S_{2B}$, and this does not happen if and only if
the given conditions are satisfied.

$S_{3D}$, $S_{3E}$:\\
A vector $\psi_{ikl}\in S_{3D}$ cannot be a
max combination of other vectors of $S_2$ than those
in $S_{2C}$. It is not a max combination of vectors in
$S_{2C}$ only if $i$ is not suitable for existence
of vectors in $S_{2C}$. This happens if  
$i\in K_1\cap I_2$ or $(i,l)\in\oW$ for the case $\psi_{ikl}\in S_{3D1}$,
and $i\in I_1\cap K_2$ or $(k,i)\in\oW$ for the case $\psi_{ikl}\in S_{3D2}$.
Finally, the vectors in $S_{3E}$ cannot be combinations of vectors in $S_2$, since only vectors in $S_{2C}$ have relevant supports (and yet not enough). So the vectors in $S_{3E}$ are in the basis whenever they exist.

We note that the complexity of the above procedure id $O(n^3)$,
which is due to the computation of the sets $M_1(i,l)$ 
\eqref{mil-def}, $M_2(i,k)$ \eqref{mik-def}, $N_1(i,l)$ and
$N_2(i,k)$ \eqref{n-def}, and checking conditions for all
combinations of three unit vectors.

We conclude the paper with two examples. The second example is taken from \cite{WTFT}, Example 4.2.

{\em Example 1.} To illustrate the sets of generators
constructed in the paper on a simple example, we consider
the following system of two inequalities with four variables:
\begin{equation}
\label{e:sysex}
\begin{split}
& 4\otimes x_3\oplus 2\otimes x_4 \leq x_1\oplus 2\otimes x_2,\\
& 3\otimes x_1\oplus x_3 \leq x_2.
\end{split}
\end{equation}
We have $I_1=\{3,4\}$, $J_1=\oI_1=\{1,2\}$,
$I_2=\{1,3\}$, $J_2=\{2\}$, $\oI_2=\{2,4\}$.
We compute\\
$S_1$: just $e_2$, since $\oI_1\cap\oI_2=\{2\}$;\\
$S_{2A1}$: just $\gamma_{24}^1e_2\oplus e_4=e_2\oplus e_4$,
since $J_1\cap\oI_2=\{2\}$ and $I_1\cap\oI_2=\{4\}$;\\
$S_{2A2}$: just $\gamma_{21}^2e_2\oplus e_1=3e_2\oplus e_1$,
since $J_2\cap\oI_1=\{2\}$ and $I_2\cap\oI_1=\{4\}$;\\
$S_{2B}$: $(\gamma_{23}^1\oplus\gamma_{23}^2)e_2\oplus e_3=2e_2\oplus e_3$,
since $J_1\cap J_2=\{2\}$ and $I_1\cap I_2=\{3\}$;\\
$S_{2C}$: empty, since $J_2\cap I_1$ is empty;\\
$S_{3A}$: trivializes to $S_{2B}$;\\ 
$S_{3B1}$: empty, since $J_2\cap I_1$ is empty;\\
$S_{3B2}$: just $\gamma_{21}^2\gamma_{14}^1 e_2\oplus \gamma_{14}^1 e_1\oplus e_4=5e_2\oplus 2e_1\oplus e_4$,
since $J_1\cap I_2=\{1\}$, $J_2\cap \oI_1=\{2\}$, 
$\oI_2\cap I_1=\{4\}$;\\
$S_{3C1}$: empty, since $J_2\cap I_1$ is empty;\\
$S_{3C2}$: just $(\gamma_{23}^2\oplus \gamma_{21}^2\gamma_{13}^1) e_2\oplus \gamma_{13}^1 e_1\oplus e_3$, which is $7e_2\oplus 4e_1\oplus e_3$,
since $J_1\cap I_2=\{1\}$, $J_2\cap \oI_1=\{2\}$, 
$I_2\cap I_1=\{3\}$;\\
$S_{3D1}$, $S_{3D2}$ and $S_{3E}$: empty, since $J_2\cap I_1$ is empty.

In this example, the basis consists of four generators in
$S_1$, $S_{2A1}$, $S_{2A2}$ and $S_{2B}$: $e_2$, 
$e_2\oplus e_4$, $3e_2\oplus e_1$ and $2e_2\oplus e_3$.
Indeed, the remaining two generators in $S_3$ are redundant:
1) $5e_2\oplus 2e_1\oplus e_4$ ($S_{3B2}$) is a combination
of $e_2\oplus e_4$ ($S_{2A1}$) and $3e_2\oplus e_1$ ($S_{2A2}$),
2) $7e_2\oplus 4e_1\oplus e_3$ ($S_{3C2}$) is a combination
of $3e_2\oplus e_1$ ($S_{2A2}$) and $2e_2\oplus e_3$ ($S_{2B}$).

{\em Example 2.}
To compare our results with the approach of \cite{WTFT},
we consider \cite{WTFT}, Example 4.2, which is a system
of two inequalities with seven variables:
\begin{equation}
\label{e:sysex2}
\begin{split}
& x_4\oplus 4\otimes x_5\oplus 2\otimes x_6\oplus 6\otimes x_7\leq x_1\oplus 1\otimes x_2\oplus 5\otimes x_3,\\
& 5\otimes x_2\oplus 6\otimes x_3\oplus 2\otimes x_7\leq
3\otimes x_1\oplus x_4\oplus 2\otimes x_5\oplus 4\otimes x_6.
\end{split}
\end{equation} 
In this case $I_1=\{4,5,6,7\}$, $J_1=\{1,2,3\}=\oI_1$,
$I_2=\{2,3,7\}$, $J_2=\{1,4,5,6\}=\oI_2$.  We compute the
generators comparing them with those in the table of
\cite{WTFT} page 365:\\
$S_1:$ just $e_1$, since $\oI_1\cap\oI_2=\{1\}$. This is $x_1$ in the table of \cite{WTFT}.\\
$S_{2A1}:$ Combining $J_1\cap\oI_2=\{1\}$ and
$I_1\cap\oI_2=\{4,5,6\}$ we obtain $e_1\oplus e_4$, $4e_1\oplus e_5$ and $2e_1\oplus e_6$. Vector $e_1\oplus e_4$ corresponds to $x_3$, and the remaining two vectors are $x_5$ and $x_{10}$ in the table of \cite{WTFT}.\\
$S_{2A2}:$ Combining $J_2\cap \oI_1=\{1\}$ and
$I_2\cap \oI_1=\{2,3\}$ we obtain $2e_1\oplus e_2$ and $3e_1\oplus e_3$. These correspond to $x_4$ and $x_7$ in the table of \cite{WTFT}.\\
$S_{2B}:$ just $6e_1\oplus e_7$, combining
$J_1\cap J_2=\{1\}$ with $I_1\cap I_2=\{7\}$. This is
$x_2$ in the table of \cite{WTFT}.\\
$S_{2C}.$ To compute these we need to combine
$J_1\cap I_2=\{2,3\}$ with $J_2\cap I_1=\{4,5,6\}$. For each
$k=2,3$ and $l=4,5,6$ we need to check whether $\gamma_{kl}^1\gamma_{lk}^2\leq \bunity$, and each time
this condition is satisfied we have two vectors (or just
one vector if $\gamma_{kl}^1\gamma_{lk}^2=\bunity$). In our case
the condition is satisfied only with $k=3$ and $l=6$. This yields
two vectors $2e_6\oplus e_3$ and $e_3\oplus 3e_6$, which are
$x_6$ and $x_{11}$ in the table of \cite{WTFT}.\\
$S_{3A}:$ trivializes to $S_{2B}$.\\
$S_{3B1}:$ We need to combine $J_1\cap \oI_2=\{1\}$,
$J_2\cap I_1=\{4,5,6\}$ and $I_2\cap\oI_1=\{2,3\}$.
For $i=2,3$, $l=4,5,6$ and $k=1$, each time when 
$\gamma_{il}^1\gamma_{li}^2>\bunity$, we have to verify
whether $\gamma_{kl}^1\gamma_{li}^2<\gamma_{ki}^2$ holds.
Each time when both conditions are satisfied, we have an independent vector of the basis. Here it never happens.\\
$S_{3B2}:$ We need to combine $J_1\cap I_2=\{2,3\}$,
$J_2\cap \oI_1=\{1\}$ and $\oI_2\cap I_1=\{4,5,6\}$.
For $k=2,3$, $i=4,5,6$ and $l=1$, each time when 
$\gamma_{ki}^1\gamma_{ik}^2>\bunity$, we have to verify
whether $\gamma_{lk}^2\gamma_{ki}^1<\gamma_{li}^1$ holds.
Each time when both conditions are satisfied, we have an independent vector of the basis. Here it happens with
1) $l=1$, $k=3$ and $i=4$ leading to $3e_1\oplus e_3\oplus 5e_4$
which corresponds to $x_8$ of \cite{WTFT}, 2)
$l=1$, $k=3$ and $i=5$ leading to $3e_1\oplus e_3\oplus 1e_5$,
which corresponds to $x_9$ of \cite{WTFT}. \\
$S_{3C1}:$ Here we combine $J_1\cap\oI_2=\{1\}$ with
$J_2\cap I_1=\{4,5,6\}$ and $I_1\cap I_2=\{7\}$. Since $\gamma_{ki}^1>\gamma_{ki}^2$ with $k=1$ and $i=7$, no vector belongs to the basis
in this case.\\
$S_{3C2}:$ We combine $J_1\cap I_2=\{2,3\}$, $J_2\cap\oI_1=\{1\}$
and $I_1\cap I_2=\{7\}$. For each $k=2,3$, $l=1$ and $i=7$
we have to verify $\gamma_{li}^2\oplus \gamma_{lk}^2\gamma_{ki}^1<\gamma_{li}^2\oplus \gamma_{li}^1$.
This happens for $k=3$, $l=1$ and $i=7$ and yields the vector
$4e_1\oplus 1e_3\oplus e_7$, which corresponds to $x_{12}$
of \cite{WTFT}.\\
$S_{3D1}:$ We combine $J_1\cap I_2=\{2,3\},$ 
$J_2\cap I_1=\{4,5,6\}$, $I_2\cap\oI_1=\{2,3\}$.
For $k=2,3$ and $l=4,5,6$, the condition 
$\gamma_{lk}^1\gamma_{kl}^2\leq\bunity$ holds only for 
$k=3$ and $l=6$, so it remains to verify $\gamma_{il}^1\gamma_{li}^2>\bunity$ for $i=2$ and $l=6$.  
This condition holds and we obtain $\gamma_{36}^1\gamma_{62}^2e_3
\oplus\gamma_{62}^2 e_6\oplus e_2$ which is proportional with
$2e_2\oplus e_3\oplus 3e_6$. 
Note that the max-linear combination of $e_2,e_3,e_6$  given for  $x_{13}$ in the table of \cite{WTFT} is an error, since $Ax_{13} \not\le Bx_{13}$. \\
$S_{3D2}:$ We combine $J_1\cap I_2=\{2,3\},$ 
$J_2\cap I_1=\{4,5,6\}$, $\oI_2\cap I_1=\{4,5,6\}$.
For $k=2,3$ and $l=4,5,6$, the condition 
$\gamma_{lk}^1\gamma_{kl}^2\leq\bunity$ holds only for 
$k=3$ and $l=6$, so it remains to verify $\gamma_{ki}^1\gamma_{ik}^2>\bunity$ for $i=4,5$ and $k=3$.  
This condition holds in both cases and yields $\gamma_{63}^2\gamma_{34}^1e_6
\oplus\gamma_{34}^1 e_3\oplus e_4$ which is proportional with
$2e_6\oplus e_3\oplus 5e_4$, and $\gamma_{63}^2\gamma_{35}^1e_6
\oplus\gamma_{35}^1e_3\oplus e_5$ proportional with
$2e_6\oplus e_3\oplus 1e_5$.\\
$S_{3E}:$ We combine $J_1\cap I_2=\{2,3\},$ $J_2\cap I_1=\{4,5,6\}$ and $I_1\cap I_2=\{7\}$. As
$\gamma_{kl}^1\gamma_{lk}^2<\bunity$ only for $k=3$ and $l=6$,
we have only one generator, namely $3e_6\oplus 1e_3\oplus e_7$.
\vskip 1cm 

Thus the basis consists of $e_1,$ $8$ combinations of $2$ unit vectors and $7$ combinations of $3$ unit vectors.

The $2$-combinations are: $e_1\oplus e_4$, $4e_1\oplus e_5$ and $2e_1\oplus e_6$ ($S_{2A1}$), $2e_1\oplus e_2$ and $3e_1\oplus e_3$ ($S_{2A2}$),
$6e_1\oplus e_7$ ($S_{2B}$), $2e_6\oplus e_3$ and 
$e_3\oplus 3e_6$ ($S_{2C}$).

The $3$-combinations are: $3e_1\oplus e_3\oplus 5e_4$,
$3e_1\oplus e_3\oplus 1e_5$ ($S_{3B2}$), $4e_1\oplus 1e_3\oplus e_7$ ($S_{3C2}$), $2e_2\oplus e_3\oplus 3e_6$ ($S_{3D1}$),
$2e_6\oplus e_3\oplus 5e_4$ and $2e_6\oplus e_3\oplus 1e_5$
($S_{3D2}$), $3e_6\oplus 1e_3\oplus e_7$ ($S_{3E}$).

We note that all vectors that we have found, are solutions of the
system, and moreover, all $3$-generators turn both inequalities into equalities, which in analogy with the convex analysis also suggests that they must be extremals (the $2$-generators correspond to the intersections with coordinate planes). Actually vectors in $S_{3B2}$ and $S_{3C2}$ are different from
$x_8,x_9$ and $x_{12}$ from the table of \cite{WTFT} page 365,
to which they correspond in terms of supports. For these, $x_8=4e_1\oplus e_3\oplus 4e_4$
is a combination of $3e_1\oplus e_3\oplus 5e_4$ (from $S_{3B2}$),
$3e_1\oplus e_3$ (from $S_{2A2})$ and $e_1$, 
$x_9=4e_1\oplus 1e_3\oplus e_5$ is a combination of $3e_1\oplus e_3\oplus 1e_5$ (from $S_{3B2}$)
and $3e_1\oplus e_3$, and $x_{12}=5e_1\oplus 1e_3\oplus e_7$ is a
combination of $4e_1\oplus 1e_3\oplus e_7$ (from $S_{3C2}$)
and $e_1$. The remaining generator in the table of
\cite{WTFT} is $x_{13}=e_2\oplus 2e_3\oplus 1e_6$. It is in error, since it violates the second inequality of \eqref{e:sysex2}, but in terms of support, it corresponds to
$2e_2\oplus e_3\oplus 3e_6$ from $S_{3D1}$. Also, there are
three combinations which are not in the table of \cite{WTFT},
from $S_{3D2}$ and $S_{3E}$.

%\bibliographystyle{plain}
%\bibliography{twoineqs-basis}

\begin{thebibliography}{1}

\bibitem{AGG-09}
X.~Allamegeon, S.~Gaubert, and E.~Goubault.
\newblock Computing the extreme points of tropical polyhedra.
\newblock E-print arXiv:math/0904.3436v2, 2009.

\bibitem{BSS-07}
P.~Butkovi{\v{c}}, H.~Schneider, and S.~Sergeev.
\newblock Generators, extremals and bases of max cones.
\newblock {\em Linear Algebra Appl.}, 421:394--406, 2007.

\bibitem{BH-84}
P.~Butkovi\v{c} and G.~Heged\"{u}s.
\newblock {An elimination method for finding all solutions of the system of
  linear equations over an extremal algebra}.
\newblock {\em Ekonom.-Mat. Obzor (Prague)}, 20(2):203--215, 1984.

\bibitem{GK-07}
S.~Gaubert and R.~Katz.
\newblock The Minkowski theorem for max-plus convex sets.
\newblock {\em Linear Algebra Appl.}, 421:356--369, 2007.
\newblock E-print arXiv:math.GM/0605078.

\bibitem{NS-07II}
V.~Nitica and I.~Singer.
\newblock Max-plus convex sets and max-plus semispaces. II.
\newblock {\em Optimization}, 56:293--303, 2007.

\bibitem{Ser-08}
S.~Sergeev.
\newblock Multiorder, {Kleene} stars and cyclic projectors in the geometry of
  max cones.
\newblock In G.L. Litvinov and S.N. Sergeev, editors, {\em Tropical and
  Idempotent Mathematics}, volume 495 of {\em Contemporary Mathematics}, pages
  317--342. AMS, Providence, 2009.
\newblock E-print arXiv:0807.0921.

\bibitem{Wag-91}
E.~Wagneur.
\newblock Modulo{\"{\i}}ds and pseudomodules. 1. Dimension theory.
\newblock {\em Discrete Mathematics}, 98:57--73, 1991.

\bibitem{WTFT}
E.~Wagneur, L.~Truffet, F.~Faye, and M.~Thiam.
\newblock Tropical cones defined by max-linear inequalities.
\newblock In G.L. Litvinov and S.N. Sergeev, editors, {\em Tropical and
  Idempotent Mathematics}, volume 495 of {\em Contemporary Mathematics}, pages
  351--366. AMS, Providence, 2009.

\end{thebibliography}

\end{document}